\newtheorem{theorem}{Theorem}[section]
\newtheorem{lemma}[theorem]{Lemma}
\newtheorem{definition}[theorem]{Definition}
\theoremstyle{definition}
\newtheorem{rem}[theorem]{Remark}
\theoremstyle{remark}
\numberwithin{equation}{section}
\renewcommand{\leq}{\leqslant}
\renewcommand{\geq}{\geqslant}
\renewcommand{\epsilon}{\varepsilon}
\begin{document}
{\setlength{\baselineskip}%
                        {1.3\baselineskip}
                      
\title{All topologies come from a family of $0-1$-valued quasimetrics}
\author{Zafer Ercan and Mehmet Vural}
\address{(Zafer Ercan \& Mehmet Vural) Department of Mathematics, Abant \.{I}zzet Baysal University, G\"{o}lk\"{o}y Kamp\"{u}s\"{u}, 14280 Bolu, Turkey}
\email{zercan@ibu.edu.tr \& m.vural.hty@gmail.com}
\subjclass[2000]{Primary 54A35.}

\keywords{Continuity spaces, 0-1 valued quasimetric spaces,statistical convergence}
\maketitle 
\begin{abstract} We prove the statement in the title.  
\end{abstract}
\section{Continuity spaces and Kopperman's Theorem} 
Topological spaces are natural extensions of metric topologies. A topological space whose topology is a metric topology is called a {\bf\emph{metrizable space}}. Most of the fundamental examples of topological spaces are not metrizable (for general definitions and examples, see \cite{eng}), thereby one of the fundamental research topics in General Topology has been to find conditions under which a topological space becomes metrizable. In \cite{kopperman}, despite the fact that all topologies taken into account are not metrizable, types of such conditions are shown to be obtained in terms of extended quasi-metrics. To prove this, Kopperman introduced the notion of continuity spaces in \cite{kopperman}, which reads as follows.
 
A semigroup $A$ with identity and absorbing element ${\infty}\not =0$ is called a {\bf\emph{value semigroup}} if the following conditions are satisfied:
 \begin{enumerate}
\item[\textnormal{(i)}] If $a+x=b$ and $b+y=a$, then $a=b$ (in this case, if $a\leq b$ is defined as $b=a+x$ for some $x$, then $\leq$ defines a partial order on $A$).
\item[\textnormal{(ii)}] For each $a$, there is a unique $b$ such that $b+b=a$ (in this case, one writes $b={1\over 2}a$).
\item[\textnormal{(iii)}] For each $a$, $b$, the element $a\wedge b:=\inf\{a,b\}$ exists.
\item[\textnormal{(iv)}] For each $a$, $b$, $c$, the equality $a\wedge b+c=(a+c)\wedge (b+c)$ holds.
\end{enumerate}
A {\bf\emph{set of positives}} in a value semigroup $A$ is a subset $P\subset A$ satisfying the following:
 \begin{enumerate}
\item[\textnormal{(i)}] if $a$, $b\in P$, then $a\wedge b\in P$; 
\item[\textnormal{(ii)}] $r\in P$ and $r\leq a$, then $a\in P$;
\item[\textnormal{(iii)}] $r\in P$, then ${r\over 2}\in P$;
\item[\textnormal{(iv)}] if $a\leq b+r$ for each $r\in P$, then $a\leq b$.
\end{enumerate}
Let $X$ be a non-empty set, $A$ a value semigruop, $P$ a set of positives of $A$, and $d:X\times X\rightarrow A$ a function such that 
$d(x,x)=0$ and $d(x,z)\leq d(x,y)+d(y,z)$ for all $x$, $y$, $z\in A$. Then $\mathcal{A}=(X,d,A,P)$ is called a {\bf\emph{continuity space}}.
For each $x\in X$ and $r\in P$, we write
\begin{center}$B[x,r]=\{y\in X:d(x,y)\leq r\}$\end{center}
\begin{theorem} [Kopperman, \cite{kopperman}] Let $\mathcal{A}=(X,d,A,P)$ be a continuity space. Then 
\begin{center}$\textnormal{To}(\mathcal{A}):=\{U\subset X:\mbox{ for each } x\in U\mbox{ there exists } r\in P\mbox{ such that }B[x,r]\subset U\}$\end{center}
is a topology on $X$. Moreover, every topology on $X$ is of this form.
\end{theorem}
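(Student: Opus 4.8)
The plan is to prove the two assertions separately; the first is routine, and the substance lies in the second. \emph{That $\textnormal{To}(\mathcal{A})$ is a topology:} we have $\emptyset\in\textnormal{To}(\mathcal{A})$ vacuously, and $X\in\textnormal{To}(\mathcal{A})$ because $P\neq\emptyset$ (if $P$ were empty, axiom (iv) of a set of positives would force $a\leq b$ for all $a,b$, impossible since $\infty\neq 0$), so $B[x,r]\subseteq X$ for any fixed $r\in P$. Stability under arbitrary unions is immediate, since a ball $B[x,r]$ lying inside one member of a family lies inside the union. For a binary intersection $U\cap V$ with $x\in U\cap V$, choose $r,s\in P$ with $B[x,r]\subseteq U$ and $B[x,s]\subseteq V$; then $r\wedge s\in P$ by axiom (i) of a set of positives, and $B[x,r\wedge s]\subseteq B[x,r]\cap B[x,s]$ because $r\wedge s\leq r$, $r\wedge s\leq s$, and $\leq$ is a partial order, so $d(x,y)\leq r\wedge s$ implies $d(x,y)\leq r$ and $d(x,y)\leq s$.

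\emph{Building a continuity space from a given topology.} Fix a topological space $(X,\tau)$ with $X\neq\emptyset$. For each $U\in\tau$ define $d_U:X\times X\to\{0,1\}$ by $d_U(x,y)=0$ if $x\in U$ implies $y\in U$, and $d_U(x,y)=1$ otherwise. A direct check gives $d_U(x,x)=0$ and the triangle inequality in $[0,\infty]$: if $d_U(x,z)=1$ then $x\in U$ and $z\notin U$, and for any $y$ one of $d_U(x,y)$, $d_U(y,z)$ equals $1$ (according as $y\notin U$ or $y\in U$). Now let $A=[0,\infty]^{\tau}$ with coordinatewise addition, partial order, binary infima, and halving. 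Since $([0,\infty],+,0,\infty)$ is a value semigroup and each value-semigroup axiom is checked coordinatewise, $A$ is a value semigroup; and $d:=(d_U)_{U\in\tau}:X\times X\to A$ takes values in $\{0,1\}^{\tau}\subseteq A$, satisfies $d(x,x)=0$, and obeys the triangle inequality coordinatewise. In this way the family $(d_U)_{U\in\tau}$ of $0$-$1$-valued quasimetrics is packaged into a single $A$-valued $d$.

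\emph{The set of positives.} The one delicate point is to take
\[
P=\{\,f\in A:\ f(U)>0\ \text{for every }U\in\tau\ \text{and}\ f(U)=\infty\ \text{for all but finitely many }U\in\tau\,\}.
\]
This set is closed under halving, which is precisely why the more naive requirement that $\{U:f(U)<1\}$ be finite must be strengthened (that requirement fails axiom (iii) once $\tau$ is infinite, e.g. for $f\equiv 1$); it is upward closed and closed under binary infima (a union of two finite sets being finite); and it satisfies axiom (iv): for a fixed $U_0\in\tau$ the function equal to $\epsilon$ at $U_0$ and to $\infty$ elsewhere lies in $P$, so $a\leq b+r$ for all $r\in P$ forces $a(U_0)\leq b(U_0)+\epsilon$ for every $\epsilon>0$, hence $a(U_0)\leq b(U_0)$, hence $a\leq b$. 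Therefore $\mathcal{A}=(X,d,A,P)$ is a continuity space.

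\emph{Identifying the topology.} Finally I would prove $\textnormal{To}(\mathcal{A})=\tau$. For $\tau\subseteq\textnormal{To}(\mathcal{A})$: given $V\in\tau$ and $x\in V$, take $r\in P$ with $r(V)=\tfrac{1}{2}$ and $r(U)=\infty$ for $U\neq V$; then $d(x,y)\leq r$ forces $d_V(x,y)=0$, hence $y\in V$, so $B[x,r]\subseteq V$. For $\textnormal{To}(\mathcal{A})\subseteq\tau$: given $V\in\textnormal{To}(\mathcal{A})$ and $x\in V$, pick $r\in P$ with $B[x,r]\subseteq V$; the set $\{U\in\tau:r(U)<1\}$ is finite, and the inequality $d_U(x,y)\leq r(U)$ holds automatically when $r(U)\geq 1$ (since $d_U\leq 1$) while for $r(U)<1$ it is equivalent to ``$x\in U\Rightarrow y\in U$'', so $B[x,r]=\bigcap\{U\in\tau:r(U)<1\text{ and }x\in U\}$, a finite intersection of members of $\tau$ containing $x$ (the empty intersection being $X$). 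Hence each point of $V$ has a $\tau$-open neighborhood inside $V$, so $V\in\tau$. The only genuine obstacle in the whole argument is isolating this scaling-invariant positivity condition on $P$; everything else is bookkeeping with the value-semigroup axioms and with the elementary description of the balls $B[x,r]$.
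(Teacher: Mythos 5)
Your argument is correct, and every step I checked goes through: the topology axioms for $\textnormal{To}(\mathcal{A})$ (including the nice observation that axiom (iv) for a set of positives forces $P\neq\emptyset$), the coordinatewise value-semigroup structure on $[0,\infty]^{\tau}$ (the only step left implicit is identifying the order induced by addition on the product with the coordinatewise order, which is a one-line check and is needed for the infimum axiom), the verification that your $P$ satisfies all four positivity axioms, and the identification $B[x,r]=\bigcap\{U\in\tau:r(U)<1,\ x\in U\}$. Be aware, however, that the paper itself offers no proof of this theorem: it is quoted from Kopperman \cite{kopperman}, and the note's actual content is a reformulation designed to make this statement cheap. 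The closest in-paper comparison is the proof of its Theorem 2.4, which uses exactly the same family of quasi-metrics $d_U$ (with $d_U(x,y)=0$ precisely when $x\in U$ implies $y\in U$), but packages them as a single map $d:X\times X\to\{0,1\}^{\tau}$ and calls a set open when around each of its points it contains a finite intersection of zero-sets $\{y:d_U(x,y)=0\}$; there is no value semigroup and no set of positives at all. So your proposal essentially reconstructs Kopperman's original route: the extra work you do---choosing $P$ (everywhere positive, $\infty$ off a finite set), checking its closure under halving, and translating balls $B[x,r]$ into finite intersections---is precisely the bookkeeping that the paper's $0$-$1$-valued formalism replaces by the finite index set $J$ in the definition of $\textnormal{To}(X,d,I)$. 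What your route buys is an actual self-contained proof of the quoted theorem in Kopperman's framework; what the paper's route buys is the elimination of that machinery, which is the point of the note.
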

\section{The Main Result}
The main issue of the present note is tto reveal the fact that  Kopperman's theorem can be refined by taking $0-1$-valued generalized quasi-metric spaces insistead of continuity spaces. We will first define related notions which will be used in the sequel.

A function $d:X\times X\rightarrow [0,{\infty})$ is called a {\bf\emph{quasi-metric}} if $d(x,x)=0$ and $d(x,z)\leq d(x,y)+d(y,z)$ for all $x$, $y$, $z\in X$. A {\bf\emph{$0-1$-valued generalized quasi-metric}} on a set $X$ is a function from $X\times X$ into $\{0,1\}^{I}$ for some non-empty set $I$
if for each $i\in I$ the function $d_{i}:X\times X\rightarrow\{0,1\}$ defined by $d_{i}(x,y)=d(x,y)(i)$ is a quasi-metric. In such a case, we will refer to $(d_{i})_{i\in I}$ as a 
{\bf\emph{partition}} of $d$.
A set  $X$ equipped with a $0-1$-valued generalized quasi-metric  $d$ is called a {\bf\emph{$0-1$-valued generalized quasi-metric space}}.
Following the usual custom, we denote this space by $(X,d,I)$. A subset $U\subset X$ is called {\bf\emph{open}} if for each $x\in U$ there exists a finite 
 set $J\subset I$ such that 
$$\bigcap_{i\in J}\{y\in X:d(x,y)(i)=0\}\subset U.$$ The set of open sets with respect to $(X,d,I)$ is denoted by 
$\textnormal{To}(X,d,I)$. 
\begin{lemma} Let $(X,d,I)$ be a $0-1$-valued generalized quasi-metric space. Then, for each $x\in X$ and $i\in I$, the set $\{y\in X:d_{i}(x,y)=0\}$ is open.
\end{lemma}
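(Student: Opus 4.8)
The plan is to verify the definition of openness directly, using the single index $i$ as the required finite subset of $I$.

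Fix $x\in X$ and $i\in I$, and put $U=\{y\in X:d_{i}(x,y)=0\}$. To show $U\in\textnormal{To}(X,d,I)$ I must show that for every $z\in U$ there is a finite set $J\subset I$ with $\bigcap_{j\in J}\{w\in X:d(z,w)(j)=0\}\subset U$. The natural candidate is the singleton $J=\{i\}$, which is finite, so this is an admissible choice in the definition of an open set. With this choice the intersection is simply $\{w\in X:d_{i}(z,w)=0\}$.

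It then remains to check the inclusion $\{w\in X:d_{i}(z,w)=0\}\subset U$. Let $w$ be such that $d_{i}(z,w)=0$. Since $z\in U$, we have $d_{i}(x,z)=0$. Because $(d_{i})_{i\in I}$ is a partition of $d$, the function $d_{i}$ is a quasi-metric, so the triangle inequality gives
$$d_{i}(x,w)\leq d_{i}(x,z)+d_{i}(z,w)=0+0=0.$$
As $d_{i}$ takes values in $[0,\infty)$ (indeed in $\{0,1\}$), this forces $d_{i}(x,w)=0$, i.e. $w\in U$. Hence $\bigcap_{j\in\{i\}}\{w\in X:d(z,w)(j)=0\}\subset U$, and since $z\in U$ was arbitrary, $U$ is open.

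There is essentially no obstacle here; the only points worth stating explicitly are that a one-element subset of $I$ qualifies as the finite set $J$ in the definition of open set, and that the triangle inequality is available because each $d_{i}$ in the partition is by hypothesis a quasi-metric.
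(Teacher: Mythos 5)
Your proof is correct and follows essentially the same route as the paper: for a point $z\in U$ take the finite set $J=\{i\}$ and use the triangle inequality for the quasi-metric $d_{i}$ to show $\{w:d_{i}(z,w)=0\}\subset U$. The only difference is that you spell out the choice of $J$ explicitly, which the paper leaves implicit.
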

\begin{proof} Let $U:=\{y\in X:d_{i}(x,y)=0\}$ and let $y\in U$ be given. Then $d_{i}(x,y)=0$. If $d_{i}(y,z)=0$, then we have 
$$0\leq d_{i}(x,z)\leq d_{i}(x,y)+d_{i}(y,z)=0,$$ so that
$$\{z:d_{i}(y,z)=0\}\subset U.$$ It follows that $U$ is open. 
\end{proof}
The proof of the following is elementary and is therefore omitted.
\begin{theorem} Let $(X,d,I)$ be a $0-1$-valued generalized quasi-metric space. Then $\textnormal{To}(X,d,I)$ is a topological space. If $(d_{i})_{i\in I}$ is the partition corresponding to $d$, then the familyy
$$\{\{y\in X:d_{i}(x,y)=0\}:i\in I, x\in X\}$$ is a subbase of $\textnormal{To}(X,d,I)$.
\end{theorem}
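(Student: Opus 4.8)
The plan is to verify the three topology axioms directly from the definition of openness and then to observe that the subbase assertion is essentially built into that definition once the preceding lemma is available.

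First I would check that $\textnormal{To}(X,d,I)$ is a topology. The empty set is open vacuously, and $X$ is open because the empty subfamily $J=\emptyset$ yields the intersection $X$; this is the only place where the convention that an empty intersection of subsets of $X$ equals $X$ is used. For an arbitrary union $\bigcup_{\alpha}U_{\alpha}$ of open sets and a point $x$ in it, $x$ lies in some $U_{\alpha_{0}}$, and a finite $J\subset I$ witnessing openness of $U_{\alpha_{0}}$ at $x$ also witnesses openness of the union at $x$. For an intersection $U_{1}\cap\cdots\cap U_{n}$ of open sets and a point $x$ in it, I would pick for each $k$ a finite $J_{k}\subset I$ with $\bigcap_{i\in J_{k}}\{y:d(x,y)(i)=0\}\subset U_{k}$; then $J:=J_{1}\cup\cdots\cup J_{n}$ is finite and $\bigcap_{i\in J}\{y:d(x,y)(i)=0\}\subset U_{1}\cap\cdots\cap U_{n}$, so the intersection is open (it suffices to treat $n=2$).

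Next, for the subbase claim, set $\mathcal{S}:=\{\{y\in X:d_{i}(x,y)=0\}:i\in I,\ x\in X\}$. By the preceding lemma every member of $\mathcal{S}$ belongs to $\textnormal{To}(X,d,I)$, and since $\textnormal{To}(X,d,I)$ is a topology it contains all finite intersections and arbitrary unions of members of $\mathcal{S}$; hence the topology generated by $\mathcal{S}$ is contained in $\textnormal{To}(X,d,I)$. Conversely, given $U\in\textnormal{To}(X,d,I)$, for each $x\in U$ choose a finite $J_{x}\subset I$ with $V_{x}:=\bigcap_{i\in J_{x}}\{y:d_{i}(x,y)=0\}\subset U$. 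Since $d_{i}(x,x)=0$ for all $i$, we have $x\in V_{x}$, so $U=\bigcup_{x\in U}V_{x}$ exhibits $U$ as a union of finite intersections of members of $\mathcal{S}$; thus $U$ lies in the topology generated by $\mathcal{S}$, and $\mathcal{S}$ is a subbase.

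There is no genuine obstacle here; the only points deserving attention are the conventions that an empty intersection of subsets of $X$ equals $X$ and an empty union equals $\emptyset$, together with the observation that the ``finite $J$'' clause in the definition of openness is precisely the statement that the basic open sets are finite intersections of the subbasic sets in $\mathcal{S}$ — which is exactly what makes the subbase assertion immediate once the lemma is in hand.
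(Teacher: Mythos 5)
Your proof is correct; the paper itself omits the argument as ``elementary,'' and what you give is exactly the standard verification the authors have in mind: the three topology axioms checked directly from the definition (taking unions of the finite witness sets $J_k$ for finite intersections), plus the two inclusions showing that the family $\{\{y:d_i(x,y)=0\}\}$ generates $\textnormal{To}(X,d,I)$, using the preceding lemma for one direction and $d_i(x,x)=0$ for the other. One tiny remark: the empty-intersection convention is not actually needed for $X$ to be open, since for any $x$ one may take any singleton $J\subset I$ (recall $I\neq\emptyset$) and the resulting intersection is trivially contained in $X$.
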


Let us denote a value of a proposition $p$ by $t(p)$. Let $(X,{\tau})$ be a topological space and $U\in{\tau}$ be given. For each $U\in{\tau}$, the map $d_{U}:X\times X\rightarrow\mathbb{R}$ defined by

$$\begin{cases} 0, & \textnormal{if}\ \ t(x\in U\implies y\in U)=1;\\ 1, &\textnormal{if}\ \ t(x\in U\implies y\in U)=0. \end{cases}$$
is a quasi-metric. Indeed, let $x$, $y$ and $z\in X$ be given. If $x\not\in U$ then, $t(x\in U\implies y\in U)=1$ so $d_{U}(x,z)=0$. If $x\in U$,
$d_{U}(x,y)=0$ and $d_{U}(y,z)=0$, then $y\in U$ and $z\in U$. Thus $t(x\in U\implies z\in U)=1$, so $d_{U}(x,z)=0$. This shows that $d_{U}$ is a quasi-metric. Also, for each $u \in \tau$, one can define a function $p_{u}: X \times X \rightarrow \mathbb{R} $ as $p_{u}(x,y)= \chi_{\{u\}}(x) \chi_{\{u^{c}\}}(y)$ is also a quasi-metric,which is equivalent to the quasi-metric $d_{u}$.
In particular, we have the following.
\begin{lemma} Let $(X,{\tau})$ be a topological space and $U\in{\tau}$ be given. Then, for each $x\in U$, one has
$$U=\{y\in X:d_{U}(x,y)=0\}.$$
\end{lemma}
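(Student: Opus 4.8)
The plan is to prove the two set inclusions separately, in each case simply unwinding the definition of $d_U$ together with the truth table of the implication $x\in U\implies y\in U$, and using crucially the standing hypothesis $x\in U$.

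For the inclusion $U\subset\{y\in X:d_U(x,y)=0\}$, I would take an arbitrary $y\in U$. Then the consequent of the implication $x\in U\implies y\in U$ holds, so the implication is true, i.e.\ $t(x\in U\implies y\in U)=1$; by the definition of $d_U$ this gives $d_U(x,y)=0$, so $y$ belongs to the right-hand set. For the reverse inclusion, I would take $y\in X$ with $d_U(x,y)=0$. By the definition of $d_U$ this forces $t(x\in U\implies y\in U)=1$, that is, the implication $x\in U\implies y\in U$ is true; since $x\in U$ by hypothesis, modus ponens yields $y\in U$.

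There is essentially no obstacle here: the assertion is an immediate consequence of the definition of $d_U$ and the fact that a true implication with a true premise has a true conclusion. The only point worth flagging is that the hypothesis $x\in U$ is genuinely used in the second inclusion; indeed, if $x\notin U$ then $d_U(x,y)=0$ holds for every $y\in X$, so the right-hand set would be all of $X$ and the identity would fail in general.
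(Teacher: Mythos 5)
Your proof is correct and is exactly the argument the paper has in mind (the paper states this lemma without proof, as an immediate consequence of the definition of $d_U$). Your remark that the hypothesis $x\in U$ is essential for the reverse inclusion is a worthwhile observation.
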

Interestingly enought, the converse of the above fact is also true.
\begin{theorem} Every topological spaces comes from a $0-1$-valued generalized quasi-metric space. That is, if $(X,{\tau})$ is a topological space, then there exists a $0-1$-valued generalized quasi-metric on $X$ such that ${\tau}=\textnormal{To}(X,d,I)$.
\end{theorem}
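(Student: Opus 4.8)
The plan is to take the index set to be the topology itself. Set $I=\tau$ and define $d\colon X\times X\to\{0,1\}^{\tau}$ by $d(x,y)(U)=d_{U}(x,y)$ for every $U\in\tau$, where $d_{U}$ is the quasi-metric defined above. Since each $d_{U}$ is a quasi-metric, the family $(d_{U})_{U\in\tau}$ is a partition of $d$, so $(X,d,\tau)$ is a $0-1$-valued generalized quasi-metric space, and by Theorem~2.2 the collection $\textnormal{To}(X,d,\tau)$ is a topology on $X$ (with subbase $\{\{y\in X:d_{U}(x,y)=0\}:U\in\tau,\ x\in X\}$). Everything then reduces to the single equality $\tau=\textnormal{To}(X,d,\tau)$, which I would prove by checking the two inclusions directly from the definition of $\textnormal{To}$.

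For $\tau\subseteq\textnormal{To}(X,d,\tau)$, fix $U\in\tau$ and $x\in U$. By the preceding Lemma, $U=\{y\in X:d_{U}(x,y)=0\}$, so the finite set $J=\{U\}\subseteq\tau$ satisfies $\bigcap_{V\in J}\{y\in X:d(x,y)(V)=0\}=U\subseteq U$; hence $x$ has the required neighbourhood inside $U$, and $U\in\textnormal{To}(X,d,\tau)$.

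For the reverse inclusion, the key preliminary observation is that for every $U\in\tau$ and every $x\in X$ the set $\{y\in X:d_{U}(x,y)=0\}$ is $\tau$-open: if $x\in U$ it equals $U$ by the preceding Lemma, while if $x\notin U$ then $t(x\in U\implies y\in U)=1$ for all $y$, so the set equals $X$. Granting this, let $W\in\textnormal{To}(X,d,\tau)$; for each $x\in W$ choose a finite $J_{x}\subseteq\tau$ with $\bigcap_{U\in J_{x}}\{y\in X:d_{U}(x,y)=0\}\subseteq W$. This is a finite intersection of $\tau$-open sets, hence $\tau$-open, and it contains $x$ since $d_{U}(x,x)=0$; therefore $W$ is a union of $\tau$-open sets and so $W\in\tau$. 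This gives $\tau=\textnormal{To}(X,d,\tau)$. I do not expect a real obstacle: once $I=\tau$ is chosen, the argument is bookkeeping with the two lemmas already available, the only delicate point being the case $x\notin U$ in the observation above, which is exactly what prevents $\textnormal{To}(X,d,\tau)$ from being strictly finer than $\tau$.
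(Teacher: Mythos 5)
Your proposal is correct and follows essentially the same route as the paper: take $I=\tau$, use the quasi-metrics $d_{U}$, and verify both inclusions via the identity $U=\{y:d_{U}(x,y)=0\}$ for $x\in U$. Your explicit treatment of the case $x\notin U$ (where the set $\{y:d_{U}(x,y)=0\}$ is all of $X$) is in fact slightly more careful than the paper's own argument, which passes over this point when it asserts $x\in\bigcap_{i=1}^{n}U_{i}\subset V$.
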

\begin{proof} For each $x$, $y\in X$ and $U\in{\tau}$, if the proposition ``$x\in U\implies y\in U$'' is true let $d(x,y)(U)=0$, and otherwise let it be 1. Then we have a function $d:X\times X\rightarrow\{0,1\}^{\tau}$. One can easily show that it is indeed a $0-1$-valued generalized quasi-metric. Now we show that ${\tau}=\textnormal{To}(X,d,I)$. Let $U\in{\tau}$ and $x\in U$ be given. 
Since $\{y\in X:d_{U}(x,y)=0\}\in \textnormal{To}(X,d,I)$ it directly follows that
$$U=\{y\in X:d_{U}(x,y)=0\},$$ whence $U\in To(X,d,I)$. Now, let $V\in To(X,d,I)$ be given. 
If $V=X$, then obviously $V\in {\tau}$. Suppose that $V\not =X$. 
Let $x\in V$. Then there exitst $U_{1},\ldots U_{n}\in {\tau}$ such that
$$\bigcap_{i=1}^{n}\{y\in X:d_{U_{i}}(x,y)=0\}\subset V.$$
By Lemma 2.3 we have 
$$x\in\bigcap_{i=1}^{n}U_{i}\subset V,$$ 
so that $V\in{\tau}$. The proof that ${\tau}=To(X,d,I)$ is now complete.
\end{proof}
It is obvious that a subbase of the space $To(X,d,I)$ is 
\begin{center}$\mathcal{B}=\{\{y:d_{i}(x,y)=0\}:x\in X,i\in I\}$,\end{center} where $(d_{i})_{i\in I}$ is a partition of $d$.

Through lack of symmetry, categorizing the notion of convergence as right convergence and left convergence is reasonable in a $0-1$-valued generalized quasi-metric space $(X,d,I)$. The definition is as follows.
\begin{definition}
 A net $(x_\alpha)_{\alpha\in A} $ right converges to $x$ in $(X,d,I)$, denoted by $(x_\alpha)\xrightarrow{r} x$, if for each $i \in I$ there exists $\alpha_{0} \in A $ such that $d_{i}(x,x_{\alpha})= 0$ for all $\alpha \geq \alpha_{0}$ . A net $(x_\alpha)_{\alpha\in A} $ is called right Cauchy (or, $r$-Cauchy) if for each $i \in I$ there exist $\alpha_{0} \in A $ such that $d_{i}(x_{\alpha},x_{\beta})= 0$ for all $\beta \geq \alpha \geq \alpha_{0}$.
\end{definition}
The definitions of left convergence and left Cauchyness are given similarly: for the sake of simplicity, only `right' versions of them are used in the rest of the note.
\begin{rem} Several familiar topological notions can be derived using the structure of $0-1$-valued generalized quasi-metric spaces. We list some of them below.

\begin{enumerate}
\item[(1)] Let $(X,d,I)$ be a $0-1$-valued generalized quasi-metric space. Then the following are equivalent:
\begin{enumerate}
\item[(a)] The net $(x_{\alpha})_{\alpha \in A}$ right converges to $x$ in $(X,d,I)$.
\item[(b)] The net $(x_{\alpha})_{\alpha \in A}$ converges to $x$ in $\textnormal{To}(X,d,I)$.
\item[(c)] The net $d(x,x_{\alpha})$ converges to zero in the product topological space ${\{0,1\}}^{I}$.
\end{enumerate}
\item[(2)] Let $(X,d,I)$ and $(Y,p,J)$  be $0-1$-valued generalized quasi-metric spaces, and $f$ a function from $X$ into $Y$. Then $f$ is continous at a point if and only if for each $j \in J$ there exists $i \in I$ such that $d_{i}(x,y)=0$ implies $p_{j}(f(x),f(y))=0$.
\item[(3)] Let $(X,d,I)$ be a $0-1$-valued generalized quasi-metric space. Then $\textnormal{To}(X,d,I)$ is a $T_0$ space if and only if for every distinct pair $x,y \in X$ there exists $i\in I$ such that $d_{i}(x,y)=1$.
\item[(4)]  $\textnormal{To}(X,d,I)$ is $T_{1}$-space if and only if for every distinct pair $x,y \in X$ there exists $i,j \in I$ such that $d_{i}(x,y)=1$ and $d_{i}(y,x)=1$.
\item[(5)] $\textnormal{To}(X,d,I)$ is a $T_{2}$-space if and only if for every distinct pair $x,y \in X$ there exists $i,j \in I$ such that $$d_{i}(x,y)=d_{i}(y,x)=d_{j}(x,y)=d_{j}(y,x)=1.$$

\item[(6)] The notion of statistical convergence of a sequence of real numbers is as follows: A sequence $(x_{n})$ of real numbers is said to {\bf\emph{converge statistically}} to the real number $x$ if for each $\epsilon>0$ one has $\delta(A_{\epsilon})=0$, where $A_{\epsilon}=\{n \in \mathbb{N} : |x_{n}-x|\geq \epsilon\}$ and $$\delta(A_{\epsilon})=\lim_{n \rightarrow \infty} {\sum_{\substack {a \in A_{\epsilon}, a\leq n}}1\over n}.$$ In \cite{kocina}, it is defined for topological spaces as well. Here is its variant using the aforementioned arguments: A sequence $(x_{n})$ in $(X,d,I)$ is said to convergence statistically to $x$ if $$\lim_{n \rightarrow \infty} \frac{|\{k\leq n:d_{i}(x,x_{k})= 1\}|}{n}=0$$ holds for each $i \in I$.
\end{enumerate}
\end{rem}


\begin{thebibliography}{10}
\bibitem{eng} R. Engelking, \emph{General Topology}, Heldermann Verlag, Berlin, 1989.
\bibitem{kopperman} R. Kopperman, ``All topolgies come from generalized metrics,'' \emph{Amer. Math. Monthly}, {\bf 95} (1988), no. 2, 89--97.
\bibitem{kocina} Giuseppe Di Maio \& Ljubi\v sa D.R. Ko\v cinac, ``Statistical convergence in topology,'' \emph{Topology Appl.} {\bf 156} (2008), no. 1, 28--45.

\end{thebibliography}
\end{document}